\newcommand{\bydef}{:=}
\newcommand{\cA}{\mathcal{A}}
\newcommand{\cE}{{\mathcal E}}
\newcommand{\bG}{\mathbf{G}}
\newcommand{\bmu}{\boldsymbol{\mu}}
\newcommand{\NN}{{\mathbb N}}
\newcommand{\ZZ}{{\mathbb Z}}
\newcommand{\FF}{\mathbb{F}}
\newcommand{\CC}{\mathbb{C}}
\newcommand{\bb}{\mathrm{b}}
\newcommand{\id}{\mathrm{id}}
\DeclareMathOperator{\Aut}{\mathrm{Aut}}
\DeclareMathOperator{\Der}{\mathrm{Der}}
\DeclareMathOperator{\Hom}{\mathrm{Hom}}
\DeclareMathOperator{\End}{\mathrm{End}}
\DeclareMathOperator{\Mat}{\mathrm{Mat}}
\DeclareMathOperator{\Lie}{\mathrm{Lie}}
\DeclareMathOperator{\charac}{\mathrm{char}}
\DeclareMathOperator{\AAut}{\mathbf{Aut}}
\DeclareMathOperator{\aAut}{\mathsf{Aut}}
\DeclareMathOperator{\DDiag}{\mathbf{Diag}}
\DeclareMathOperator{\Alg}{\mathrm{Alg}}
\newtheorem{theorem}{Theorem}[section]
\newtheorem{proposition}[theorem]{Proposition}
\newtheorem{lemma}[theorem]{Lemma}
\newtheorem{corollary}[theorem]{Corollary}
\theoremstyle{definition}
\newtheorem{df}[theorem]{Definition}
\newtheorem{example}[theorem]{Example}
\theoremstyle{remark}
\newtheorem{remark}[theorem]{Remark}
\begin{document}

\title{Evolution  algebras, automorphisms, and graphs }

\author[Alberto Elduque]{Alberto Elduque$^{\star}$}
\thanks{$^{\star}$ Supported by grants MTM2017-83506-C2-1-P (AEI/FEDER, UE) and E22\_17R (Diputaci\'on
General de Arag\'on).
Part of this research was done while this author was visiting the Departamento de Matem\'aticas, Facultad de Ciencias, Universidad de
Chile, supported by FONDECYT grant 1170547.}
\address{Departamento de Matem\'aticas e Instituto Universitario de Matem\'aticas y Aplicaciones,
Universidad de Zaragoza, 50009 Zaragoza, Spain
}
\email{elduque@unizar.es}

\author[Alicia Labra]{Alicia Labra$^{\star\star}$}
\thanks{$^{\star\star}$ Supported by FONDECYT 1170547.}
\address{Departamento de Matem\'aticas,
Facultad de Ciencias, Universidad de Chile.  Casilla 653, Santiago, Chile}
\email{alimat@uchile.cl}

\begin{abstract}
The affine group scheme of automorphisms of an evolution algebra $\cE$ with $\cE^2=\cE$ is shown to lie 
in an exact sequence $1\rightarrow \mathbf{D}\rightarrow \AAut(\cE)\rightarrow \mathsf{S}$, where $\mathbf{D}$, diagonalizable,
and $\mathsf{S}$, constant, depend solely on the directed graph associated to $\cE$.

As a consequence, the Lie algebra of derivations $\Der(\cE)$ (with $\cE^2=\cE$) is shown to be trivial if the characteristic of the ground field
is $0$ or $2$, and to be abelian, with a precise description, otherwise.
\end{abstract}

\maketitle


\section{Introduction}

Evolution algebras were introduced in 2006 by Tian and Vojtechovsky (see \cite{TV}) and present many connections with other fields like graph theory, group theory or Markov chains,
to mention a few (see Tian's monograph \cite{T}).  They have received  considerable attention in the  last years (see \cite{CSV} and the references therein).

In this paper, all the algebras considered will be defined over a ground field $\FF$, of arbitrary characteristic, and their dimension will be finite. An algebra is just a vector space $\cA$ endowed with a bilinear map (the multiplication) $\cA \times \cA \rightarrow \cA, (x,y) \mapsto xy$.

\begin{df}
An \emph{evolution algebra} is an algebra $\cE$ endowed with a basis $B =  \{v_1, v_2, \ldots, v_n\}, $ called a  \emph{natural basis}, such that $v_i v_j = 0$  for any  $1 \leq i \neq j \leq n$.
\end{df} 

Given any evolution algebra $\cE$ with natural basis  $B =  \{v_1, v_2, \ldots, v_n\}, $ and multiplication determined by
 $v_i^2 = \sum_{i = 1}^n \alpha_{ij} v_j $ ($\alpha_{ij} \in \FF$), an associated (directed) graph $ \Gamma= \Gamma (\cE,B)$ is defined in 
 \cite{EL}. The set of vertices $V$ of $\Gamma$ is just the natural basis, and the set of edges $E \subseteq V \times V$ consists of those pairs $(v_i,v_j) $ 
 with $\alpha_{ij} \neq 0$, that is, $(v_i, v_j) \in E$ if $v_j$ appears in $v_i^2$ with nonzero coefficient.
 
 The graph $ \Gamma= \Gamma (\cE,B)$ is used in \cite{EL,EL2} to get new results on these algebras and to provide new natural proofs of some known results. 
 
 In particular, it is shown in  \cite{EL} that the group of automorphisms $\Aut(\cE)$ is finite if $\cE^2 = \cE$ (or equivalently the matrix 
 $(\alpha_{ij})$  is regular). Over an infinite field $\FF$, the regular matrices form a Zariski open, and hence dense, set in $\Mat_n(\FF)$. So, in a way, we have that
 $\Aut(\cE) $ is finite for ``almost all'' evolution algebras.
 
 Over fields of positive characteristic, or over nonalgebraically closed fields of characteristic $0$, the affine group scheme of automorphisms $\AAut(\cE)$ contains much more information than $\Aut(\cE)$ including, in particular, the information on the Lie algebra of derivations $\Der(\cE)$.
 
 Here we follow the functorial approach to affine group schemes (see for instance \cite{W}). An affine group scheme  is a representable group-valued functor defined on the category $\Alg_{\FF}$ of unital commutative, associative algebras. Thus, given an evolution algebra $\cE$, $\AAut(\cE)$ is the functor $ \Alg_{\FF}\longrightarrow \mathrm{Grp}$ that takes any object $R$ in  $\Alg_{\FF}$ to the group $\Aut(\cE_R)$ of automorphisms, as an $R$-algebra, of $\cE_R \bydef
 \cE \otimes_{\FF} R$. The action on morphisms is the natural one.
 
 The Lie algebra $\Lie(\AAut(\cE))$ is canonically isomorphic to the Lie  algebra of derivations $\Der(\cE) = \{ \delta  \in \End_{\FF}(\cE) \mid \delta(xy) = \delta(x)y + x\delta(y)$ for any $x,y \in  \cE\}$ (see \cite[Example A.43]{EK}).
 
 Now, the fact that $\Aut(\cE)$ is finite if
 $\cE^2 = \cE $ \cite[Theorem 4.3]{EL} shows, in particular, that $\Aut(\cE_{\FF_{\text{alg}}})$ is finite, where $\FF_{\text{alg}}$ is an algebraic closure of $\FF$, and hence the affine group scheme $\AAut(\cE)$ is finite, that is, the Hopf algebra that represents it is finite dimensional.

If the characteristic of the ground field $\FF$ is $0$, then any finite affine group scheme is \'etale, and hence the Lie algebra is trivial. Therefore
\cite[Theorem 4.8]{EL} implies $\Der(\cE) = 0$ if $\cE^2 = \cE$ and $\charac(\FF) = 0$. 
(This result over $\CC$ has been proven in \cite[Theorem 2.6]{CGOT} in a different way).

However, as some examples in \cite{CMMS} show, this is no longer true if $\charac(\FF) > 0$. 

The goal of the present paper is to show that given any evolution algebra $\cE$ with  $\cE^2 = \cE$, there is an exact sequence \eqref{eq:exact}
\[
1  \longrightarrow  \DDiag(\Gamma) \longrightarrow   \AAut(\cE)  \longrightarrow  \aAut(\Gamma)  
\]
where $ \aAut(\Gamma)$ is the constant group scheme attached to the group of automorphism of the graph associated to $\cE$ in \cite{EL}, while 
$\DDiag(\Gamma)$ is a finite diagonalizable group scheme defined in terms solely of $\Gamma$. That is the elements in the exact sequence, except
$ \AAut(\cE)$ itself, depend only on $\Gamma$ (!!). 

An affine group scheme is diagonalizable if it is a subscheme of a torus \cite[\S 2.2]{W} or, equivalently, if the representing Hopf algebra is the gruop 
algebra of a finitely generated abelian group. In our situation, $\DDiag(\Gamma)$ turns out to be a product of schemes of roots of unity $\bmu_N$ ($N \in \NN$),
where $\bmu_N(R) = \{r \in R \mid r^N = 1\}$ for any $R$ in $\Alg_{\FF}$, which  is represented by the quotient $\FF[x] / (x^N-1)$, that is, the group algebra of the cyclic group of order $N$.

On the other hand, given  a finite group $G$, the associated constant group scheme $\mathsf{G}$ is the group scheme represented by $\FF^{G} \bydef \mathrm{Maps}(G,\FF)=\bigoplus_{g\in G}\FF \epsilon_g$, where 
\[ 
\epsilon_{g}(h) = \begin{cases}

 1  & \text{if $h = g$,}   \\

 0 & \text{otherwise,}  \\
 \end{cases}
\] 
(see  \cite[\S 2.4]{W}). For any $R$ in $\Alg_\FF$ without proper idempotents, $\mathsf{G}(R)$ is (isomorphic to) the group $G$.

Note that $\FF^G \simeq \FF \times \FF \times \cdots \times \FF$ is the cartesian product of $|G|$ copies of $\FF$.
In particular,  $\FF^G $ is a separable algebra and hence $\mathsf{G}$ is \'etale.

The paper is structured as follows. Section \ref{se:diagonal} will be devoted to define and study the diagonalizable affine group scheme $\DDiag(\Gamma)$ associated to any graph. For connected $\Gamma$, $\DDiag(\Gamma)$ is either trivial or isomorphic  to $\bmu_N$ for some natural number $N$, given by the so called \emph{balance} of $\Gamma$. Section \ref{se:graph_auto} will deal with the group of automorphisms of a graph. Its main result: Theorem \ref{th:exact}, gives the exact sequence
 \eqref{eq:exact} mentioned above. This exact sequence induces a short exact sequence \eqref{eq:short-exact} which does not split in general. 
 Finally Section 
 \ref{se:derivations} is devoted to describe the Lie algebra of derivations of any  evolution algebra $\cE$ with  $\cE^2 = \cE$. The description is a direct consequence of our results on the affine group scheme $ \AAut(\cE)$. It turns out that $\Der(\cE)$ depends only on the graph.

\smallskip
 

\section{The diagonal group of a graph}\label{se:diagonal}

All the graphs considered in this paper are directed graphs. These are pairs  $\Gamma =(V,E)$, consisting of a finite set of vertices $V$ and a set of edges (or arrows) 
$ E \subseteq V \times V$.

Given such a graph, we need some definitions
\begin{itemize}
\item A \emph{path} is a sequence $\gamma = (v_0, e_1, v_1, \ldots, v_{n-1}, e_n,v_n)$ where $n \geq 0$, $v_0, \ldots, v_n \in V$, $e_1, \ldots,e_n \in E$, and for each $i = 1,\ldots,n$, either $e_i =(v_{i-1}, v_i)$ or $e_i = (v_i, v_{i-1})$.

We define the   \emph{balance} of the path $\gamma $ as the integer 
\begin{multline*}
\bb(\gamma) = \Bigl\lvert\left\{i \mid 1 \leq i \leq n \ \mbox{and} \ e_i =(v_{i-1}, v_i )\right\}\Bigr\rvert  \\
- \Bigl\lvert\left\{i \mid 1 \leq i \leq n \ \mbox{and} \  e_i =(v_{i}, v_{i-1} )\right\}\Bigr\rvert \,.
\end{multline*}
that is, $\bb(\gamma)$ is obtained by adding $1$ if the edge $e_i$ goes in the ``right" direction (from $v_0$ to $v_n$) and $-1$ if the edge $e_i$ goes in the ``wrong" direction, and summing over $i$. 

The \emph{balance} of $\Gamma  $ is defined as the greatest common divisor of the absolute values of the balances of the cycles in $\Gamma$:
\[
\bb(\Gamma) = \gcd\left\{\lvert \bb(\gamma)\rvert : \gamma \ \mbox{cycle in} \ \Gamma \right\}.
\]

\item A \emph{cycle} is a path $\gamma = (v_0, e_1, v_1, \ldots, v_{n-1}, e_n,v_n)$ with $v_0 = v_n$.

\item The \emph{indegree} of a vertex $v$ is the natural number (or $0$)
\[
\deg^{-}(v) = \Bigl\lvert\left\{w \in V \mid (w,v) \in E\right\}\Bigr\rvert,
\]
while the \emph{outdegree} is 
\[ 
\deg^{+}(v) = \Bigl\lvert\left\{w \in V \mid (v,w) \in E\right\}\Bigr\rvert.
\]
The vertex $v$ is said to be a \emph{source} if $\deg^{-}(v) = 0$, and a \emph{sink} if $\deg^{+}(v) = 0$.

\item  $\Gamma $ is said to be \emph{connected} if the underlying undirected graph is connected, that is, if for every $v,w \in V$ there exists a path 
\[
\gamma = (v_0, e_1, v_1 ,\ldots, v_{n-1}, e_n,v_n)
\] 
with $v_0 = v $ and $ v_n = w$. Any graph $\Gamma $ is the ``disjoint union" of its 
\emph{connected components}
\end{itemize}

\begin{df}\label{def:diagonal}
The \emph{diagonal group} of a graph  $\Gamma =(V,E)$ is the (diagonalizable) affine group scheme $\DDiag(\Gamma)$ given by
\[
\DDiag (\Gamma)(R) = 
\{\varphi: V \longrightarrow R^{\times} \mid \forall  (v,w) \in E, \  \varphi(w) = \varphi(v)^2\},
\]
with the natural morphisms.
\end{df}

Note that  $\DDiag(\Gamma)$ is a subgroup scheme of the  torus $(\bG_m)^{|V|}$

\medskip

Let us see a few examples.

\begin{example}
\[
\Gamma:\qquad\raisebox{-2pt}{\begin{mbox}%
{\begin{tikzpicture}%
[->,>=stealth',shorten >=1pt,auto,thick,every node/.style={circle,fill=blue!20,draw}]

  \node (n1) at (4,4)  {};
  \node (n2) at (6,4)  {};
  \node (n3) at (8,4)  {};

   \path[every node/.style={font=\sffamily\small}]

           (n1) edge node [above] {$ $} (n2) ;
           
   \path[every node/.style={font=\sffamily\small}]
           
 (n3) edge node [above] {$ $} (n2) ;
\end{tikzpicture}}
\end{mbox}}
\]
 then 
\[
\DDiag(\Gamma)\simeq \bigl(\bG_m \times {\bG}_m\bigr) /  \{(\mu_1,\mu_2)  \mid \mu_1^2 = \mu_2^2\} \simeq \bG_m  \times \bmu_2.
\]
\end{example}

\medskip

\begin{example}
\[ 
\Gamma:\qquad\raisebox{-8pt}{\begin{mbox}%
{\begin{tikzpicture}%
[->,>=stealth',shorten >=1pt,auto,thick,every node/.style={circle,fill=blue!20,draw}]

  \node (n1) at (4,4)  {$a$};
  \node (n2) at (6,4)  {$b$};
  \node (n3) at (8,4)  {$c$};

   \path[every node/.style={font=\sffamily\small}]

           (n1) edge node [above] {$ $} (n2) ;
           
   \path[every node/.style={font=\sffamily\small}]

         (n3)  edge  [bend left] node[above] {$ $} (n2) ;

          \path[every node/.style={font=\sffamily\small}]
            (n2)  edge [bend left] node[below] {$ $} (n3) ;

\end{tikzpicture}}
\end{mbox}}
\] 
$\Gamma$ has no sinks.

 If $\varphi \in \DDiag(\Gamma)(R) $ and $\varphi(a) = \mu$ ($\in R^{\times}$),  then $\varphi(b) = \mu^2$, $\varphi(c) = \mu^4$, and $\varphi(b) = \varphi(c)^2$, that is $\mu^2 = \mu^8$, so $\mu^6 = 1$. Hence $\DDiag(\Gamma)\simeq \bmu_6$.
 
 \end{example}

\medskip

\begin{example}
\[
\Gamma:\qquad\raisebox{-8pt}{\begin{mbox}{%
\begin{tikzpicture}%
[->,>=stealth',shorten >=1pt,auto,thick,every node/.style={circle,fill=blue!20,draw}]

  \node (n1) at (4,4)  {$a$};
  \node (n2) at (6,4)  {$b$};
  \node (n3) at (8,4)  {$c$};

   \path[every node/.style={font=\sffamily\small}]

           (n2) edge node [above] {$ $} (n1) ;
           
   \path[every node/.style={font=\sffamily\small}]
   
         (n3)  edge  [bend left] node[above] {$ $} (n2);

       \path[every node/.style={font=\sffamily\small}]
            (n2)  edge [bend left] node[below] {$ $} (n3) ;

\end{tikzpicture}}
\end{mbox}}
\] 
$\Gamma$ has no sources.

Again, if $\varphi \in \DDiag(\Gamma)(R) $ and $\varphi(c) = \mu$, then $\varphi(b) = \mu^2$, $\varphi(a) = \mu^4$, and $\varphi(c) = \varphi(b)^2$, that is, $\mu = \mu^4$, so $\mu^3 = 1$. Hence $\DDiag (\Gamma)\simeq \bmu_3$.

\end{example}

\medskip

From the definitions, we get at once the next result:

\begin{proposition}\label{prop:connected}
Let $\Gamma =(V,E)$ be a graph with connected components $\Gamma_i =(V_i,E_i)$, $i = 1 , \ldots ,n$ (so that $ V = V_1 \mathop{\dot\cup} \cdots \mathop{\dot\cup} V_n$). Then 
\[
\DDiag(\Gamma) \simeq \DDiag(\Gamma_1)  \times \cdots \times \DDiag(\Gamma_n).
\]
 \end{proposition}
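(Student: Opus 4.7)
The plan is to exhibit the stated isomorphism pointwise (on $R$-points for every $R \in \Alg_\FF$) and verify that it is natural in $R$, which is all that is needed for an isomorphism of affine group schemes.

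First I would use the fact that the connected components partition both vertices and edges: $V = V_1 \mathop{\dot\cup} \cdots \mathop{\dot\cup} V_n$, and crucially $E = E_1 \mathop{\dot\cup} \cdots \mathop{\dot\cup} E_n$, since every edge of $\Gamma$ lies in exactly one connected component (an edge joining two different components would, by the definition of connectedness as path-connectedness in the underlying undirected graph, force those components to coincide). Fix any $R$ in $\Alg_\FF$. A map $\varphi: V \to R^\times$ is the same datum as an $n$-tuple of maps $(\varphi_1, \ldots, \varphi_n)$ with $\varphi_i := \varphi|_{V_i}: V_i \to R^\times$, the inverse direction being the obvious concatenation.

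Next I would check that the defining condition matches up. The condition $\varphi(w) = \varphi(v)^2$ for all $(v,w) \in E$ becomes, once we split $E$ as $E_1 \mathop{\dot\cup} \cdots \mathop{\dot\cup} E_n$, the conjunction over $i$ of the conditions $\varphi_i(w) = \varphi_i(v)^2$ for all $(v,w) \in E_i$. Hence $\varphi \in \DDiag(\Gamma)(R)$ if and only if $\varphi_i \in \DDiag(\Gamma_i)(R)$ for every $i$. This gives a bijection
\[
\DDiag(\Gamma)(R) \;\longrightarrow\; \DDiag(\Gamma_1)(R)\times \cdots \times \DDiag(\Gamma_n)(R),
\]
which is evidently a group homomorphism since both products are computed pointwise on $V$.

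Finally, to upgrade this to an isomorphism of affine group schemes, I would note that the construction $\varphi \mapsto (\varphi|_{V_1}, \ldots, \varphi|_{V_n})$ is manifestly natural in $R$: for any morphism $R \to S$ in $\Alg_\FF$, the induced map $R^\times \to S^\times$ commutes with restriction to any subset of $V$. Therefore the family of group isomorphisms above assembles into an isomorphism of functors $\Alg_\FF \to \mathrm{Grp}$, hence of affine group schemes. There is no real obstacle here; the only point worth being careful about is the factorization $E = E_1 \mathop{\dot\cup} \cdots \mathop{\dot\cup} E_n$, which depends on the observation that the undirected notion of connectedness suffices to confine each directed edge to a single component.
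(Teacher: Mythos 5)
Your proof is correct and is exactly the argument the paper has in mind: the paper simply states that the proposition follows ``at once from the definitions,'' and your write-up is the natural expansion of that, with the one genuinely relevant observation (that $E = E_1 \mathop{\dot\cup} \cdots \mathop{\dot\cup} E_n$, so the defining condition splits component by component) correctly identified and justified. Nothing further is needed.
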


If $ m = 2s+1$ is an odd natural number the square map
\[ 
\begin{split}
\bmu_m(R) &\longrightarrow \bmu_m(R)\\
  r\ &\mapsto\  r^2,
\end{split}
\]
is a group automorphism for any $R$ in $\Alg_{\FF}$, with inverse $r\longrightarrow r^{\frac{1}{2}} := r^{s+1}$.
Therefore, expressions like $r^{2^{-3}}$ make sense: $r^{2^{-3}} = ((r^{\frac{1}{2}} )^{\frac{1}{2}})^{\frac{1}{2}}$.

\begin{lemma}\label{le:diagonal}
Let $\Gamma =(V,E)$ be a graph,  $\gamma = (v_0, e_1, v_1 ,\ldots, v_{n-1}, e_n,v_n)$  be a path in $\Gamma. $ Let $\varphi \in \DDiag(\Gamma)(R)$ for $R$ in $\Alg_{\FF}$,
such that $\varphi(v_i) \in \bmu_{m_i} (R)$ with $m_i$ odd for any $ i = 0, \ldots,n$. Then $\varphi(v_n) = \varphi(v_0)^{2^{\bb(\gamma)}}$.
\end{lemma}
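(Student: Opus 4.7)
\medskip

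\noindent\textbf{Proof proposal.} The plan is a straightforward induction on the length $n$ of the path, the whole point of the oddness hypothesis being that the inductive step handles both edge orientations uniformly.

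For the base case $n=0$, the path has no edges, so $\bb(\gamma)=0$ and $\varphi(v_n)=\varphi(v_0)=\varphi(v_0)^{2^0}$, as desired. For the inductive step, let $\gamma'=(v_0,e_1,v_1,\ldots,v_{n-1})$ be the sub-path of length $n-1$; by the induction hypothesis $\varphi(v_{n-1})=\varphi(v_0)^{2^{\bb(\gamma')}}$. There are two cases according to the orientation of the last edge. If $e_n=(v_{n-1},v_n)$, then by the defining condition of $\DDiag(\Gamma)$ we have $\varphi(v_n)=\varphi(v_{n-1})^2=\varphi(v_0)^{2^{\bb(\gamma')+1}}$, and since $\bb(\gamma)=\bb(\gamma')+1$ in this case we are done. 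If instead $e_n=(v_n,v_{n-1})$, then $\varphi(v_{n-1})=\varphi(v_n)^2$; since $\varphi(v_n)\in\bmu_{m_n}(R)$ with $m_n$ odd, the squaring map on $\bmu_{m_n}(R)$ is an automorphism (as noted just before the lemma), and we can solve $\varphi(v_n)=\varphi(v_{n-1})^{1/2}=\varphi(v_0)^{2^{\bb(\gamma')-1}}$. Since $\bb(\gamma)=\bb(\gamma')-1$ in this case, we again conclude $\varphi(v_n)=\varphi(v_0)^{2^{\bb(\gamma)}}$.

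The only delicate point is making sense of the exponent $2^{\bb(\gamma)}$ when $\bb(\gamma)$ is negative. This is precisely where the hypothesis that all $m_i$ are odd enters: the exponent $2^{-1}$ is interpreted inside $\bmu_{m_i}(R)$ via the inverse of the squaring automorphism, so each time we apply the inductive step with an edge in the ``wrong'' direction we are implicitly using that $\varphi(v_{n-1})\in\bmu_{m_{n-1}}(R)$ has a well-defined square root. As long as every vertex along the path sits in some $\bmu_{m_i}(R)$ with $m_i$ odd, these fractional powers compose unambiguously, and the identity $\varphi(v_n)=\varphi(v_0)^{2^{\bb(\gamma)}}$ (a statement in $\bmu_{m_0}(R)$, where $m_0$ is odd) is well-posed.

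I expect no serious obstacle; the main thing to be careful about is bookkeeping the sign convention so that in both orientations one recovers $\bb(\gamma)=\bb(\gamma')\pm 1$ consistently with the definition, and verifying that the fractional powers appearing in the backward case are legitimate in $\bmu_{m_{n-1}}(R)$ (which uses the oddness of $m_{n-1}$, not just $m_n$). No additional lemma seems needed beyond the observation on the squaring automorphism of $\bmu_m$ for odd $m$ already established in the paragraph preceding the statement.
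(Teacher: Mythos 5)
Your proof is correct and takes essentially the same approach as the paper's: propagate the defining relation $\varphi(w)=\varphi(v)^2$ along the path, using the invertibility of the squaring map on $\bmu_m(R)$ for odd $m$ to handle edges traversed in the ``wrong'' direction. The paper only works through a representative length-three example and asserts that ``the general argument follows the same lines''; your induction on the path length is precisely that general argument written out in full, including the one genuinely delicate point (well-definedness of the fractional powers), so there is nothing to add.
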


\begin{proof}
Imagine that $\gamma = (v_0, e_1, v_1,e_2, v_2, e_3,v_3)$ with $e_1 = (v_1,v_0)$, $e_2 = (v_1,v_2)$, and $e_3 =(v_3,v_2)$, so $\bb(\gamma) = -1$.

\[ 
\begin{tikzpicture}%
[->,>=stealth',shorten >=1pt,auto,thick,every node/.style={circle,fill=blue!20,draw}]

  \node (n1) at (4,4)  {$v_0$};
  \node (n2) at (6,4)  {$v_1$};
  \node (n3) at (8,4)  {$v_2$};
  \node (n4) at (10,4)  {$v_3$};
  
   \path[every node/.style={font=\sffamily\small}]

           (n2) edge node [above] {$ $} (n1) ;
           
   \path[every node/.style={font=\sffamily\small}]

         (n2)  edge  node[above] {$ $} (n3) ;

          \path[every node/.style={font=\sffamily\small}]
            (n4)  edge  node[below] {$ $} (n3) ;

\end{tikzpicture}
\] 
Then 
\begin{itemize}
\item As $e_1 = (v_1,v_0) \in E$, $\varphi(v_0) = \varphi(v_1)^2$, so $\varphi(v_1) = \varphi(v_0)^{\frac{1}{2}} = \varphi(v_0)^{2^{-1}}$.
\item As $ e_2 = (v_1,v_2) \in E$, $\varphi(v_2) = \varphi(v_1)^2$, so $\varphi(v_2) = (\varphi(v_0)^{2^{-1}})^2 = \varphi(v_0)$.
\item  As $ e_3 = (v_3,v_2) \in E$, $\varphi(v_2) = \varphi(v_3)^2$, so $\varphi(v_3) = \varphi(v_2)^{-1} = \varphi(v_1)^{2^{-1}} = \varphi(v_0)^{2^{\bb(\gamma)}}$.
\end{itemize}
 The general argument follows the same lines.
\end{proof}

Our next result determines the diagonal group of connected graphs without sources. Note that the graphs attached to evolution algebras $\cE$
with $\cE^2 = \cE$ have no sources.

\begin{theorem}\label{th:connected}
Let $\Gamma =(V,E)$ be a connected graph with no sources. Then $\DDiag(\Gamma) \simeq \bmu_N$ where $ N = 2^{\bb(\Gamma)} -1$.
\end{theorem}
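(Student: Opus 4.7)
The plan is to fix a base vertex $v_0 \in V$ and prove that the evaluation morphism $\rho : \DDiag(\Gamma) \to \bmu_N$ defined on $R$-points by $\varphi \mapsto \varphi(v_0)$ is an isomorphism of affine group schemes. The codomain must first be justified: one needs to check that $\rho$ really takes values in $\bmu_N$ and not just in $\bG_m$.

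First I would verify that $\varphi(v_0)$ has finite order dividing $N$. Since $\Gamma$ has no sources, walking backwards from $v_0$ along incoming edges in the finite graph must repeat a vertex, yielding a directed cycle $C$ of some length $k$ together with a directed forward path from a vertex of $C$ to $v_0$. Reading the edge relations around $C$ gives $\varphi(u)^{2^{k}-1} = 1$ for $u \in C$, and the forward path transports this to $\varphi(v_0)^{2^{k}-1} = 1$; in particular every $\varphi(v)$ is a root of unity of odd order, so Lemma~\ref{le:diagonal} is now applicable. Applied to any cycle $\gamma$ at $v_0$ and using that $2$ is invertible on roots of unity of odd order, the lemma yields $\varphi(v_0)^{2^{|\bb(\gamma)|}-1} = 1$. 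Connectedness lets us conjugate any cycle at any vertex, sandwiching it between a path and its reverse, to a cycle at $v_0$ of the same balance, so the gcd of $|\bb(\gamma)|$ over cycles at $v_0$ equals $\bb(\Gamma)$. Combined with the elementary identity $\gcd(2^{a}-1,2^{b}-1) = 2^{\gcd(a,b)}-1$ this yields $\varphi(v_0)^{N} = 1$.

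For injectivity of $\rho$ I would use connectedness to choose a path $\gamma_v$ from $v_0$ to each $v \in V$ and invoke Lemma~\ref{le:diagonal} once more: $\varphi(v) = \varphi(v_0)^{2^{\bb(\gamma_v)}}$, so $\varphi$ is completely determined by $\varphi(v_0)$. For surjectivity I would, given $\mu \in \bmu_N(R)$, define $\varphi(v) := \mu^{2^{\bb(\gamma_v)}}$, which lives in $\bmu_N(R)$ since $N$ is odd. Independence from the choice of path follows from the same gcd computation, applied to the cycle formed by $\gamma_v$ and the reverse of any alternative path, and the edge condition $\varphi(w) = \varphi(v)^2$ for $(v,w) \in E$ is immediate on extending $\gamma_v$ by the edge. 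The main obstacle is the initial odd-order claim: without it Lemma~\ref{le:diagonal} is unavailable and the whole argument stalls, but once it is secured via the no-source hypothesis and a backwards walk, the remainder is routine bookkeeping with paths, balances, and the gcd identity for $2^{a}-1$.
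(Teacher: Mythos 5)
Your proposal is correct and follows essentially the same route as the paper's proof: evaluate at a base vertex, reduce to roots of unity of odd order so that Lemma~\ref{le:diagonal} applies, deduce injectivity from connectedness, land in $\bmu_N$ via the cycle/gcd identity $\gcd(2^a-1,2^b-1)=2^{\gcd(a,b)}-1$, and construct a preimage of each $\mu\in\bmu_N(R)$ by transporting along paths. The only difference is that you spell out the backward-walk argument (no sources plus finiteness forces a directed cycle) for the odd-order claim, which the paper simply imports from the proof of Theorem~4.8 of \cite{EL}.
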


\begin{proof}
First, the arguments  in the proof of \cite[Theorem 4.8]{EL} show that for any $R$ in $\Alg_{\FF}$,  any $\varphi \in \DDiag(\Gamma)(R)$, and any vector $v \in V$, $\varphi(v) \in  \bmu_{2^s -1}(R)$ for some natural number $s$.

Fix a vertex $a \in V$,  and consider the restriction homomorphism 
\[
\begin{split}
\Phi_a: \DDiag(\Gamma) &\longrightarrow \bG_m\\ 
\varphi\ &\mapsto\ \varphi(a).
\end{split}
\]
We will follow several steps:
\begin{itemize}
\item $\Phi_a$ is one-to-one.

Actually, for $R$ in $\Alg_{\FF}$,  and $\varphi \in \DDiag(\Gamma)(R)$,  with $\varphi(a) = 1$, by connectedness for any vertex $v \in V$ there is a path
$\gamma = (v_0, e_1, v_1, \ldots, v_{n-1}, e_n,v_n)$ with $v_0 = a$ and $ v_n = v$. By Lemma \ref{le:diagonal}, $\varphi(v) = \varphi(a)^{2^{\bb(\gamma)}} = 1^{2^{\bb(\gamma)}} 
=1$.

\smallskip

\item For any $R $ in $\Alg_{\FF}$, and $\varphi \in \DDiag(\Gamma)(R)$, $\varphi(a) \in\bmu_N(R)$. 

Indeed, by the previous argument, for any $v \in V$, $\varphi(v) = \varphi(a)^{2^{\bb(\gamma)}} $ for any path $\gamma$ connecting $a$ and $v$. As the order of $\varphi(a)$ is odd, $\varphi(a)$ and $\varphi(v)$  generate the same subgroup of $R^{\times}$.
In particular $\varphi(a)$ and $\varphi(v)$ have the same order.

Given any cycle $\gamma = (v_0, e_1, v_1, \ldots, v_{n-1}, e_n,v_n)$   in $\Gamma$ ($v_n = v_0$), we get $\varphi(v_0) = \varphi(v_0)^{2^{\bb(\gamma)}} $,  or 
$\varphi(v_0)^{2^{\bb(\gamma)}-1} =1$. Thus the order of $\varphi(a) $ divides $2^{|\bb(\gamma)|}-1$  for any cycle $\gamma$. Using that $2^{\gcd(m_1,m_2)} -1 =
\gcd(2^{m_1}-1, 2^{m_2}-1)$,  our result follows.

\smallskip

\item The image of $\Phi_a$ is exactly $\bmu_N$.

For any $R$  in $\Alg_{\FF}$,  and any $\mu \in \bmu_N(R)$, define $\varphi: V  \longrightarrow  R^{\times}$ as follows: For any  $v \in V$, select a path connecting $a$ and $v$: 
$\gamma = (v_0, e_1, v_1, \ldots, v_{n-1}, e_n,v_n)$ with $v_0 = a$ and $v_n = v$, and define $\varphi(v) = \mu^{2^{\bb(\gamma)}} $. This is well defined, because for any other path 
 $\hat{\gamma} = (\hat{v}_0, \hat{e}_1, \hat{v}_1,\ldots, \hat{v}_{n-1}, \hat{e}_n,\hat{v}_n)$ 
 connecting $a =\hat{v}_0$ and $v = \hat{v}_n$, then
\[ 
\gamma\hat{\gamma}^{-1} := (v_0, e_1, v_1 ,\ldots, v_{n-1}, e_n,v_n=\hat{v}_n, \hat{e}_n, \hat{v}_{n-1}, \ldots, \hat{e}_1, \hat{v}_0)
\]
is a cycle with balance $\bb(\gamma \hat{\gamma}^{-1} )= \bb(\gamma) -\bb(\hat{\gamma})$ and, therefore,
$\mu = \mu^{2^{ \bb(\gamma) -\bb(\hat{\gamma})}}$. Hence
\[  
\mu^{2^{\bb(\hat{\gamma})}}= (\mu^{2^{ \bb(\gamma) -\bb(\hat{\gamma})}})^{2^{\bb(\hat{\gamma})}} = \mu^{2^{\bb(\gamma)}}.
\]
Finally, $\varphi \in \DDiag(\Gamma)(R)$, because for any $e = (v,w) \in E, $ if $\gamma = (v_0, e_1, v_1 ,\ldots, v_{n-1}, e_n,v_n)$ is a path connecting $a = v_0$ and $v = v_n$, then
$\gamma' = (v_0, e_1, v_1 ,\ldots, v_{n-1}, e_n,v_n,e,w)$ is a path connecting $a$ and $w$ with $\bb(\gamma') = \bb(\gamma) +1$. Hence, 
$\varphi(w) = \mu^{2^{\bb(\gamma')}}= (\mu^{2^{\bb(\gamma)}})^2 = \varphi(v)^2$.
\end{itemize}
\end{proof}

\begin{corollary}
Let $\Gamma =(V,E)$ be a connected graph with no sources and with a loop $e=(v,v)$. Then $\DDiag(\Gamma)  =1. $
\end{corollary}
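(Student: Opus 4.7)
The proof is essentially immediate from Theorem \ref{th:connected}, so the plan is simply to reduce to it and compute the balance. First I would observe that $\Gamma$ satisfies the hypotheses of Theorem \ref{th:connected}: it is connected by assumption and has no sources by assumption. Therefore $\DDiag(\Gamma) \simeq \bmu_N$ where $N = 2^{\bb(\Gamma)} - 1$, so it suffices to show that $\bb(\Gamma) = 1$.

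Next I would exhibit a cycle of balance $\pm 1$. The loop $e = (v,v)$ is itself a cycle $\gamma = (v, e, v)$ of length one; the single edge $e$ is traversed in the ``right'' direction (from $v_0 = v$ to $v_1 = v$), so $\bb(\gamma) = 1$. Since $\bb(\Gamma)$ is defined as the greatest common divisor of the absolute values of the balances of all cycles and one of these absolute values equals $1$, we get $\bb(\Gamma) = 1$.

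Consequently $N = 2^1 - 1 = 1$, and $\bmu_1(R) = \{r \in R : r = 1\} = \{1\}$ for every $R \in \Alg_{\FF}$, so $\DDiag(\Gamma) \simeq \bmu_1$ is the trivial group scheme.

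There is no real obstacle here; the only mildly delicate point is verifying that the loop genuinely contributes a cycle of balance $1$ to the gcd appearing in the definition of $\bb(\Gamma)$, which follows immediately from unpacking the definition of a path and its balance given at the start of Section \ref{se:diagonal}.
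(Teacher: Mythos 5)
Your proof is correct and is exactly the intended deduction: the paper states this corollary without proof immediately after Theorem~\ref{th:connected}, relying on the observation that the loop is a cycle of balance $\pm 1$, so $\bb(\Gamma)=1$, $N=2^{1}-1=1$, and $\bmu_1$ is trivial. One small caveat on the point you call ``mildly delicate'': the paper's formal definition of $\bb(\gamma)$ literally places a loop edge in both the right-direction and wrong-direction index sets (which would give $0$, not $1$), so you must either invoke the clearly intended convention that each edge traversal contributes a single $\pm 1$, or bypass the balance computation entirely by noting that $(v,v)\in E$ forces $\varphi(v)=\varphi(v)^2$ with $\varphi(v)\in R^{\times}$, hence $\varphi(v)=1$, which propagates to every vertex by connectedness and Lemma~\ref{le:diagonal}.
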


Let  $\cE$ be an evolution algebra with  natural basis $B=\{v_1, \ldots, v_n\}$  and let   $\Gamma = \Gamma(\cE,B) = (V,E)$ be the attached  graph $(V = B)$.  For any 
$R$ in $\Alg_{\FF}$,  and any $\varphi \in \DDiag(\Gamma)(R)$, $\varphi$ induces the linear (diagonal) isomorphism
\begin{equation}\label{eq:phitilde}
\begin{split}
\hat{\varphi}: \cE_R & \longrightarrow \cE_R\\  
v_i  &\mapsto \varphi(v_i)v_i. 
\end{split}
\end{equation}
Let $v_i^2 = \sum_{j=1}^{n} \alpha_{ij} v_j$ for $i =1, \ldots,n$, with $\alpha_{ij}\in\FF$, then 
\[
\hat{\varphi}(v_i^2) =  \sum_{j=1}^{n} \alpha_{ij} \hat{\varphi}(v_j) =  \sum_{j=1}^{n} \alpha_{ij} \varphi(v_j) v_j, 
\] 
and $\hat{\varphi}(v_i)^2 = \varphi(v_i)^2 \sum_{j=1}^{n} \alpha_{ij} v_j$.  

But if $\alpha_{ij} \neq 0$, then $(v_i,v_j) \in E$, so $\varphi(v_j) = \varphi(v_i)^2$. Hence $\hat{\varphi} \in \Aut(\cE_R)$ and we
obtain the following result:

\begin{theorem}\label{th:iota}
Let $\cE$ be an evolution algebra with natural basis $B$ and let $\Gamma = \Gamma(\cE,B)$ be the attached  graph. Then there is an injective homomorphism 
$\iota: \DDiag(\Gamma) \longrightarrow  \AAut(\cE)$ such that for any $R$ in $\Alg_{\FF}$,  and any $R$-point $\varphi \in \DDiag(\Gamma)(R)$, $ \iota (\varphi) = \hat{\varphi}$
(as in \eqref{eq:phitilde}).
\end{theorem}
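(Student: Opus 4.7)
The plan is to package the recipe $\varphi \mapsto \hat\varphi$ introduced just before the statement into a morphism of affine group schemes, and then to check it has trivial kernel.

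The first step is already essentially carried out in the paragraph preceding the theorem: for any $\FF$-algebra $R$ and any $\varphi \in \DDiag(\Gamma)(R)$, the map $\hat\varphi$ is a linear bijection of $\cE_R$ (invertibility comes from $\varphi(v_i) \in R^\times$, with explicit inverse $v_i \mapsto \varphi(v_i)^{-1} v_i$), it satisfies $\hat\varphi(v_iv_j) = 0 = \hat\varphi(v_i)\hat\varphi(v_j)$ for $i \ne j$ automatically, and the key identity $\hat\varphi(v_i^2) = \hat\varphi(v_i)^2$ uses precisely the defining compatibility $\varphi(v_j) = \varphi(v_i)^2$ whenever $(v_i,v_j) \in E$. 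Hence $\iota_R(\varphi) := \hat\varphi$ lies in $\Aut(\cE_R) = \AAut(\cE)(R)$.

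I would then verify that the collection $\iota = (\iota_R)_R$ is a morphism of group functors. Since the group law on $\DDiag(\Gamma)(R)$ is pointwise multiplication, $\widehat{\varphi\psi}(v_i) = \varphi(v_i)\psi(v_i) v_i = \hat\varphi\bigl(\hat\psi(v_i)\bigr)$, giving the homomorphism property on basis vectors and hence on all of $\cE_R$. Naturality in $R$ is a direct check: for $f : R \to S$ in $\Alg_\FF$, both $\iota_S(f\circ\varphi)$ and the base change $\iota_R(\varphi) \otimes_R S$ act on $v_i \in \cE_S$ as multiplication by $f(\varphi(v_i))$.

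Finally I would establish injectivity. If $\iota_R(\varphi) = \id_{\cE_R}$, then $\varphi(v_i) v_i = v_i$ in $\cE_R$, forcing $\varphi(v_i) = 1$ for every $i$, so $\varphi$ is the identity $R$-point of $\DDiag(\Gamma)$. Thus $\iota$ is a monomorphism of group functors; since $\DDiag(\Gamma)$ is diagonalizable, a standard result in \cite{W} upgrades this to a closed immersion. Alternatively one can read off injectivity directly on representing Hopf algebras: the generators of the Hopf algebra of $\DDiag(\Gamma)$, namely the characters $\varphi \mapsto \varphi(v_i)$, are pullbacks of the diagonal coordinates on $\AAut(\cE)\subseteq \GL(\cE)$, so the comorphism is surjective. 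I do not anticipate any real difficulty here; the compatibility condition built into $\DDiag(\Gamma)$ has already done the essential work in ensuring that $\hat\varphi$ is multiplicative, and the remaining verifications are formal.
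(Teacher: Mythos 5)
Your proposal is correct and follows the same route as the paper: the paper's entire argument is the computation, carried out in the paragraph preceding the theorem, that $\hat\varphi(v_i^2)=\hat\varphi(v_i)^2$ follows from the compatibility $\varphi(v_j)=\varphi(v_i)^2$ for $(v_i,v_j)\in E$, with the homomorphism, naturality, and injectivity checks left implicit. You simply spell out those routine verifications explicitly, which is fine.
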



\section{Graph Automorphisms}\label{se:graph_auto}


The goal of this section is, given an evolution algebra $\cE$ with $\cE^2 = \cE$ with attached graph $\Gamma(\cE,B)$ (which is independent, up to isomorphism, of the natural
 basis $B$ chosen \cite[Corollary 4.7]{EL}), to show the existence of a natural homomorphism
 \begin{equation}\label{eq:rho}
\rho: \AAut(\cE)  \longrightarrow \aAut(\Gamma)
\end{equation}
where $\aAut(\Gamma)$ is the constant group scheme attached to the group of automorphisms of $\Gamma$, denoted by $\Aut(\Gamma)$. If $B = \{v_1, \ldots, v_n\}$ is a natural basis we may identify $\Aut(\Gamma)$ with a subgroup of the symmetric group $S_n$ of degree $n$:
\[ 
\Aut(\Gamma) \simeq \left\{ \sigma \in S_n \mid \forall \; 1 \leq i,j \leq n,\ (v_i,v_j) \in E 
\Rightarrow (v_{\sigma(i)}, v_{\sigma(j)}) \in E \right\}.
\]
If we just look at the rational points in $\Aut(\cE) = \AAut(\cE)(\FF)$, any $\varphi \in \Aut(\cE)$ has an attached permutation $\sigma \in \Aut (\Gamma) $ such that 
$\varphi(v_i) \in \FF^{\times}v_{\sigma(i)}$ for any $i =1, \ldots,n$ (\cite[Theorem 4.4]{EL}). Thus the coordinate matrix of $\varphi$ relative to $B$ is a monomial matrix (i.e., it
has exactly one nonzero entry in each row and column). 
 In order to deal with the group scheme $\AAut(\cE) $, some extra care must be taken. Let $R$ be in  $\Alg_{\FF}$, and let $\varphi \in \AAut(\cE)(R) = \Aut(\cE_R)$, 
 with $\varphi(v_i) = \sum_{j = i}^{n} r_{ij} v_j$ for any $i =1, \ldots, n$.  Then $r = \det \bigl(r_{ij}\bigr) \in R^{\times}$:
 
\[
r = \sum_{\sigma \in S_n} (-1)^{\sigma } r_{\sigma(1) 1} \cdots r_{\sigma(n)n}\in R^\times .
\]
 
 For any $ i \neq j $ we have $ 0 = \varphi(v_i v_j) =  \varphi(v_i)\varphi(v_j) = \sum_{k=1}^n r_{ik}r_{jk} v_k^2$.
 
 Because  $\cE^2= \cE$, $\{v_1^2, \ldots, v_n^2\} $ form a basis of $\cE$ and hence 
\begin{equation}\label{eq:orthogonal}
r_{ik}r_{jk} = 0 \quad \text{for any} \  1 \leq i,j \leq n \ \mbox{with} \  i \neq j.
 \end{equation}
 
 Therefore, for any $\sigma \neq \tau $ in $S_n$,  
$ (r_{\sigma(1) 1} \cdots r_{\sigma(n)n}) ( r_{\tau(1) 1} \cdots r_{\tau(n)n}) = 0$.

For any $\sigma \in S_n$, consider the element 
\[
e_{\sigma}^{\varphi} = (-1)^\sigma r^{-1} r_{\sigma(1) 1} \cdots r_{\sigma(n)n}.
\] 
Then $ 1 = \sum_{\sigma\in S_n} e_{\sigma}^{\varphi}$, and $e_{\sigma}^{\varphi}e_{\tau}^{\varphi} =0$ for $\sigma \neq \tau$ in $S_n$. Therefore, the $e_{\sigma}^{\varphi}$'s are orthogonal idempotent elements, and 
$R = \bigoplus_{\sigma \in S_n} Re_{\sigma}^{\varphi}$. Moreover,   \eqref{eq:orthogonal} implies
\begin{equation}\label{eq:orthogonal2}
r_{ij}  e_{\sigma}^{\varphi}= 0 \quad \text{unless} \  i =  \sigma(j),
 \end{equation}
and the coordinate matrix $\bigl(r_{ij}\bigr) $ of $\varphi $ splits into a sum of monomial matrices over the orthogonal ideals $R e_{\sigma}^{\varphi}$. Thus, for instance, with $n = 3$ we have:
\begin{align*}
e_1^{\varphi } & = r_{11} r_{22}r_{33},& e_{(123)}^{\varphi }  & = r_{21} r_{32}r_{13}, & e_{(132)}^{\varphi }  & = r_{31} r_{12}r_{23}, \\
e_{(12)}^{\varphi } & =- r_{21} r_{12}r_{33},& e_{(23)}^{\varphi }  & = -r_{11} r_{32}r_{23}, & e_{(13)}^{\varphi }  & = -r_{31} r_{22}r_{13}.
\end{align*}
and $A = \bigl(r_{ij}\bigr)= \sum_{\sigma \in S_3} A_{\sigma}$,  with $A_{\sigma}= e_{\sigma}^{\varphi}A \in \Mat_3(Re_{\sigma}^{\varphi} )$ a monomial matrix thanks  to \eqref{eq:orthogonal2}:
\begin{align*}
A_1  &= e_1^\varphi 
 \begin{pmatrix}
r_{11}&0&0\\
0&r_{22}&0\\
0& 0 &r_{33}\\
\end{pmatrix},
& 
 A _{(123)}  &=e_{(123)}^\varphi 
\begin{pmatrix}
0&0&r_{13}\\
r_{21} &0&0\\
0& r_{32}&0\\
\end{pmatrix},
\\
 A _{(132)}  &=e_{(132)}^\varphi 
\begin{pmatrix}
0& r_{12}&0\\
0& 0&r_{23}\\
r_{13}& 0&0\\
\end{pmatrix}, 
&
 A _{(12)}  &=e_{(12)}^\varphi 
\begin{pmatrix}
0&r_{12}&0\\
r_{21}&0&0\\
0& 0 &r_{33}\\
\end{pmatrix},
\\
 A _{(23)}  &=e_{(23)}^\varphi 
\begin{pmatrix}
 r_{11}&0& 0\\
0& 0&r_{23}\\
0& r_{32}& 0\\
\end{pmatrix}, 
& 
A _{(13)}  &=e_{(13)}^\varphi 
\begin{pmatrix}
0& 0&r_{13}\\
0&r_{22}&0\\
r_{31}&0&0\\
\end{pmatrix}.
\end{align*}

Moreover, if $\sigma\in S_n$ and $ e_{\sigma}^{\varphi} \neq 0$, then the monomial matrix 
\[ 
A_{\sigma} = e_{\sigma}^{\varphi}\bigl(r_{ij}\bigr) = \sum_{i=1}^{n}  e_{\sigma}^{\varphi} r_{\sigma(i)i}E_{\sigma(i)i} ,
\] 
where $E_{ij} $ denotes the matrix with $1$ in the $(ij)$ slot and $0$'s elsewhere, correspond to an automorphism of $\cE_{R e_{\sigma}^{\varphi}}$. This forces
$ \sigma \in \Aut(\Gamma)$. Therefore, 
\begin{equation}\label{eq:orthogonal3}
e_{\sigma}^{\varphi} \neq 0 \ \text{only if} \  \sigma \in \Aut(\Gamma),\qquad 1 = \sum_{\sigma \in \Aut(\Gamma)} e_{\sigma}^{\varphi}\; .
\end{equation}

Recall that the coordinate Hopf algebra of the constant group scheme $\aAut(\Gamma) $ is $\FF^{\Aut(\Gamma)} = \mathrm{Maps} \bigl(\Aut(\Gamma), \FF\bigr)$, which has a natural basis $\{\epsilon_{\sigma} \mid \sigma \in \Aut(\Gamma)\}$, with 
\[ 
\epsilon_{\sigma}(\tau) = \begin{cases}

 1  & \; \mbox{if}\ \sigma =\tau,    \\

 0 & \; \mbox{otherwise.} \\
 \end{cases}
\] 
Then $\aAut(\Gamma)(R)$ is identified with $\Hom_{\Alg_{\FF}}(\FF^{\Aut(\Gamma)}, R)$.  

We are ready to define the homomorphism $\rho$ in \eqref{eq:rho}. For $R$ in $\Alg_{\FF}$ and $\varphi \in \AAut(\cE)(R) = \Aut (\cE_{R})$, the image of $\varphi$ under $\rho$ is defined as the element $\rho(\varphi) \in \Hom_{\Alg_{\FF}}(\FF^{\Aut(\Gamma)}, R)$ given by

\begin{equation}\label{eq:rho-phi}
\begin{split}
\rho(\varphi): \FF^{\Aut(\Gamma)} & \longrightarrow R\\  
\epsilon_{\sigma}\  &\mapsto\ e_{\sigma}^{\varphi}. 
\end{split}\end{equation}

It is trivially checked that this gives a homomorphism $\rho: \AAut(\cE) \rightarrow \aAut(\Gamma)$.

\begin{remark}
Exactly as over $\FF$,  if $R$ in $\Alg_{\FF} $ has no proper idempotents, then $ 1 = e_{\sigma}^{\varphi}$ for a unique $\sigma \in \Aut(\Gamma)$ and the matrix of $\varphi$ is a monomial matrix attached to $\sigma$. In this case $\aAut(\Gamma)(R) \simeq \Aut(\Gamma)$ and $\rho(\varphi) $ is just $\sigma$ under this identification.
\end{remark}

The main result of this section is the following:

\begin{theorem}\label{th:exact}
Let  $\cE$ be an evolution algebra with $\cE^2 =\cE$ and  natural basis $B=\{v_1, \ldots, v_n\}$.  Let   $\Gamma = \Gamma(\cE,B) $ be its associated  graph. Then the sequence
\begin{equation}\label{eq:exact}
1  \longrightarrow  \DDiag(\Gamma) \stackrel{\iota}{\longrightarrow }  \AAut(\cE)  \stackrel{\rho}{\longrightarrow } \aAut(\Gamma)
\end{equation}
is exact.
\end{theorem}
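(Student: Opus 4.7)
The plan is to verify that \eqref{eq:exact} is exact by working functorially: injectivity of $\iota$ has been established in Theorem \ref{th:iota}, so what remains is to check $\rho\circ\iota = 1$ and the reverse inclusion $\ker\rho\subseteq\mathrm{im}\,\iota$ on $R$-points, for every $R$ in $\Alg_\FF$. The main technical inputs available are the orthogonality relations \eqref{eq:orthogonal}--\eqref{eq:orthogonal3} and the explicit definition of $\rho$ in \eqref{eq:rho-phi}.

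First I would dispatch $\rho\circ\iota = 1$. Given $\varphi \in \DDiag(\Gamma)(R)$, the automorphism $\hat{\varphi} = \iota(\varphi)$ from \eqref{eq:phitilde} has diagonal coordinate matrix $r_{ij} = \delta_{ij}\,\varphi(v_i)$, so its determinant is $r = \prod_i \varphi(v_i)$. Plugging this into $e_\sigma^{\hat{\varphi}} = (-1)^\sigma r^{-1}\prod_i r_{\sigma(i)i}$ shows at once that $e_\sigma^{\hat{\varphi}} = 0$ whenever $\sigma \neq 1$ (some factor $r_{\sigma(i)i}$ with $\sigma(i)\neq i$ vanishes) and $e_1^{\hat{\varphi}} = 1$. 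Thus $\rho(\hat{\varphi})$ sends $\epsilon_1 \mapsto 1$ and $\epsilon_\sigma \mapsto 0$ for $\sigma \neq 1$, which is the identity of $\aAut(\Gamma)(R)$.

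For $\ker\rho \subseteq \mathrm{im}\,\iota$, let $\varphi \in \AAut(\cE)(R)$ satisfy $\rho(\varphi) = 1$; unravelling, this means $e_1^\varphi = 1$ and $e_\sigma^\varphi = 0$ for all $\sigma \neq 1$. Applying \eqref{eq:orthogonal2} with $\sigma = 1$ gives $r_{ij} = r_{ij}\,e_1^\varphi = 0$ for every $i \neq j$, so the coordinate matrix of $\varphi$ is diagonal; since its determinant $\prod_i r_{ii}$ is a unit in $R$, each $r_{ii} \in R^\times$. Defining $\psi\colon V \to R^\times$ by $\psi(v_i) = r_{ii}$, one sees $\iota(\psi) = \varphi$ by construction, and all that is left is to verify that $\psi \in \DDiag(\Gamma)(R)$.

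The one genuinely substantive step—and the only place I expect any resistance—is verifying the relations $r_{jj} = r_{ii}^2$ along every edge $(v_i,v_j) \in E$. Using that $\varphi$ preserves multiplication and expanding $v_i^2 = \sum_j \alpha_{ij} v_j$, the identity $\varphi(v_i^2) = \varphi(v_i)^2$ together with the diagonality of $\varphi$ yields, upon equating coefficients of $v_j$, the equation $\alpha_{ij}\,r_{jj} = \alpha_{ij}\,r_{ii}^2$ in $R$. Whenever $(v_i,v_j) \in E$ the scalar $\alpha_{ij} \in \FF$ is nonzero, hence a unit in the field $\FF$; as the structure map $\FF \to R$ of any object of $\Alg_\FF$ sends units to units, $\alpha_{ij}$ remains invertible in $R$ and can be cancelled. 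This gives $r_{jj} = r_{ii}^2$ for every edge of $\Gamma$, which is precisely the defining condition for $\DDiag(\Gamma)(R)$, completing the proof.
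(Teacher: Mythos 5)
Your proof is correct and follows essentially the same route as the paper: the paper's (much terser) argument likewise uses $e_1^\varphi=1$ together with \eqref{eq:orthogonal2} to identify $\ker\rho$ with the diagonal automorphisms, which are exactly the image of $\iota$. The only difference is that you spell out the cancellation of the unit $\alpha_{ij}$ needed to see that a diagonal automorphism of $\cE_R$ really gives a point of $\DDiag(\Gamma)(R)$, a step the paper leaves implicit.
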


\begin{proof}
$\ker(\rho)(R)$ consists of the automorphisms $\varphi \in \AAut(\cE)(R) = \Aut(\cE_R)$ such that $ e_{\sigma}^{\varphi} = 0$ for any $1 \neq \sigma \in \Aut(\Gamma)$. Hence $1=e_1^\varphi$ and $\varphi$ is diagonal, that is, the elements of $B$ are eigenvectors for $\varphi$. These automorphisms are precisely the elements in the image of $\iota$.
\end{proof}

\begin{example}\label{ex:aut}
The homomorphism $\rho$ is not surjective in general. Take, for instance, the evolution algebra  $\cE= \FF v_1\oplus  \FF v_2  $, with natural basis $B = \{v_1,v_2\}$, and multiplication given by $v_1^2 = v_1+ \alpha v_2$, $v_2^2 = \beta v_1 +  v_2$, with $0 \neq \alpha, \beta \in \FF$, $\alpha\neq \beta$, $ \alpha \beta \neq 1 $. Then the associated graph $\Gamma(\cE, B)$ is the complete graph
\[
\begin{tikzpicture}
[->,>=stealth',shorten >=1pt,auto,thick,every node/.style={circle,fill=blue!20,draw}]
  \node (n1) at (4,8)  {$v_1$};
  \node (n2) at (7,8) {$v_2$};

   \path[every node/.style={font=\sffamily\small}]
           (n2)  edge [bend left] node[below] {$ $} (n1)
       (n2) edge [loop right] node {$ $} (n2)
        (n1)  edge  [bend left] node[above] {$ $} (n2)
       (n1) edge [loop left] node {$ $} (n1);

\end{tikzpicture}
\]
While $\Aut(\Gamma) = C_2$, let us check that $\AAut(\cE) = 1$. To do that, it is enough to prove that $\Aut(\cE_R) = 1$ for $R$ in $\Alg_{\FF}$ without proper idempotents.

The arguments above show that the coordinate matrix relative to $B = \{v_1,v_2\}$  of any $\varphi \in \Aut(\cE_R)$ is either 
\[
\begin{pmatrix}
r_1& 0\\
0&r_2\\
\end{pmatrix} \qquad \mbox{or} \qquad 
\begin{pmatrix}
0&r_1\\
r_2 & 0\\
\end{pmatrix} \;,
\]
with $r_1, r_2 \in R^{\times}$.

In the first case $\varphi(v_1^2) = r_1 v_1 + \alpha r_2 v_2$, while $\varphi(v_1)^2 = r_1^2( v_1 + \alpha v_2)$, so $r_1^2 = r_1 = r_2$, and hence, due to the absence of 
proper idempotents, $\varphi = \id$.

In the second case $\varphi(v_1^2) = r_1 v_2 + \alpha r_2 v_1$, while $\varphi(v_1)^2 =r_1^2  v_2^2  =r_1^2( \beta v_1 +  v_2)$, so $r_1^2 = r_1$  and $\alpha r_2 = 
\beta r_1^2$.  Hence, $r_1 = 1$, $r_2 = \beta \alpha^{-1} \neq 1$. But $\varphi (v_2^2)= \varphi (v_2)^2$ forces $r_2 = 1$, a contradiction.
\end{example}

Any subgroup scheme of a constant group scheme is itself a constant group scheme. Hence we have the next consequence:

\begin{corollary}\label{co:exact}
Let  $\cE$ be an evolution algebra with $\cE^2 =\cE$ and  natural basis $B=\{v_1, \ldots, v_n\}$.  Let   $\Gamma = \Gamma(\cE,B) $ be its associated  graph. Then  there is a subgroup 
$H$ of $\Aut(\Gamma)$ and a short exact sequence
\begin{equation}\label{eq:short-exact}
1  \longrightarrow  \DDiag(\Gamma) \stackrel{\iota}{\longrightarrow }  \AAut(\cE)  \stackrel{\rho}{\longrightarrow }  \mathsf{H}  \longrightarrow  1,
\end{equation}
where $\mathsf{H}$ is the constant group scheme associated to $H$.
\end{corollary}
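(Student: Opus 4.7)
The plan is to derive the short exact sequence from the three-term exact sequence of Theorem~\ref{th:exact} by cutting the codomain of $\rho$ down to its image. First I would form the scheme-theoretic image $\mathsf{K}\bydef \rho(\AAut(\cE))$, a closed subgroup scheme of $\aAut(\Gamma)$, obtained by the usual factorization of the dual Hopf-algebra map $\FF^{\Aut(\Gamma)}\to \FF[\AAut(\cE)]$ as a surjection followed by an injection. By the remark quoted immediately before the corollary, any subgroup scheme of the constant group scheme $\aAut(\Gamma)$ is itself constant, so $\mathsf{K}=\mathsf{H}$ for a uniquely determined subgroup $H\leq \Aut(\Gamma)$, and $\rho$ factors as $\AAut(\cE)\twoheadrightarrow \mathsf{H}\hookrightarrow \aAut(\Gamma)$.

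Next I would verify exactness of \eqref{eq:short-exact} at each term. Injectivity of $\iota$ is Theorem~\ref{th:iota}. Exactness at $\AAut(\cE)$ follows from Theorem~\ref{th:exact}: the kernel of $\rho\colon \AAut(\cE)\to \aAut(\Gamma)$ equals $\iota(\DDiag(\Gamma))$, and this coincides with the kernel of the corestricted map $\AAut(\cE)\to \mathsf{H}$ since the inclusion $\mathsf{H}\hookrightarrow \aAut(\Gamma)$ is a monomorphism of group schemes. Surjectivity of $\rho$ onto $\mathsf{H}$ is true by construction, as $\mathsf{H}$ is the scheme-theoretic image.

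The only nontrivial ingredient is the claim that every subgroup scheme of a constant group scheme is constant. In Hopf-algebraic terms, the coordinate algebra $\FF^{\Aut(\Gamma)}=\bigoplus_{\sigma\in \Aut(\Gamma)}\FF\epsilon_\sigma$ is a product of copies of $\FF$, so any Hopf-ideal quotient has the form $\FF^S=\bigoplus_{\sigma\in S}\FF\epsilon_\sigma$ for some subset $S\subseteq \Aut(\Gamma)$, and compatibility with the counit, comultiplication and antipode forces $S$ to contain the identity and be closed under multiplication and inversion, i.e.\ to be a subgroup. This is the step I expect to carry the real content; once granted, the remainder of the argument is routine categorical bookkeeping built on Theorems~\ref{th:iota} and \ref{th:exact}.
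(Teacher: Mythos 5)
Your argument is correct and follows exactly the route the paper takes: the corollary is stated there as an immediate consequence of Theorem~\ref{th:exact} together with the observation that any subgroup scheme of a constant group scheme is constant, applied to the scheme-theoretic image of $\rho$. Your Hopf-algebraic justification of that observation (quotients of $\FF^{\Aut(\Gamma)}$ are of the form $\FF^S$ with $S$ forced to be a subgroup) correctly supplies the one detail the paper leaves implicit.
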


\begin{example}\label{ex:non-split}
The short exact sequence in Corollary \ref{co:exact} does not split in general.  Take, for instance the evolution algebra  $\cE= \FF v_1\oplus  \FF v_2  $  with  
$v_1^2 = v_2$, $v_2^2 = \alpha v_1 $, with $0\neq\alpha \in\FF$.
The associated graph is
\[ 
\qquad\raisebox{-8pt}{\begin{mbox}%
{\begin{tikzpicture}%
[->,>=stealth',shorten >=1pt,auto,thick,every node/.style={circle,fill=blue!20,draw}]

  \node (n2) at (4,8)  {$v_1$};
  \node (n3) at (7,8)  {$v_2$};


           
   \path[every node/.style={font=\sffamily\small}]

         (n3)  edge  [bend left] node[above] {$ $} (n2) ;

          \path[every node/.style={font=\sffamily\small}]
            (n2)  edge [bend left] node[below] {$ $} (n3) ;

\end{tikzpicture}}
\end{mbox}}
\] 
 Then $\DDiag(\Gamma) = \bmu_3$ (Theorem \ref{th:connected}) and $ \rho:  \AAut(\cE) \longrightarrow \AAut(\Gamma) \simeq \mathsf{C}_2$ is surjective, as it is so over an algebraic closure $\FF_{\text{alg}}$. Indeed,  over $\FF_{\text{alg}}$  the assignment 
\begin{equation}\label{eq:alpha13}
v_1 \mapsto  \alpha^{-1/3} v_2, \quad v_2\mapsto \alpha^{1/3} v_1,
\end{equation}
gives an automorphism $\varphi$ with $\rho(\varphi)$ being the generator of $\Aut(\Gamma)$. Moreover, $\varphi^2 = \id$ and this proves that \eqref{eq:short-exact} splits over $\FF_{\text{alg}}$. 

Let us check that the short exact sequence 
\begin{equation}\label{eq:short-exact2}
1  \longrightarrow  \DDiag(\Gamma) \longrightarrow   \AAut(\cE)  \longrightarrow  \aAut(\Gamma)\longrightarrow  1
\end{equation}
splits if and only if there is $\mu \in \FF$ such that $ \alpha = \mu^3$.

Actually, if  $ \alpha = \mu^3$ the assignment \eqref{eq:alpha13} makes sense over $\FF$, so the sequence splits. 
Conversely, if \eqref{eq:short-exact2} splits, there is an automorphism $\varphi \in \Aut(\cE)$ with $\varphi^2 = \id$, such that $\varphi(v_1) \in \FF^{\times}v_2$, $\varphi(v_2) \in \FF^{\times}v_1$.
With $\varphi(v_1) = \nu v_2$, $\varphi(v_2) = \mu v_1$, we get $\nu =\mu^{-1} $, as   $\varphi^2 = \id$, and
\[
\mu v_1 = \varphi(v_2) = \varphi(v_1^2)  = \varphi(v_1)^2 = \mu^{-2}v_2^2 = \mu^{-2}\alpha v_1,
\]
so that $\alpha = \mu^{3}$.  \qed
\end{example}


\section{Derivations}\label{se:derivations}


The results of the previous sections allow us to compute easily the Lie algebra of derivations of any evolution algebra 
$\cE$,  with $\cE^2 = \cE$. This Lie algebra depends only on the associated graph!

\begin{theorem}\label{th:Der}
Let $\cE$ be an evolution algebra  with $\cE^2 = \cE$. Let $B$ be a natural basis and let $\Gamma = \Gamma(\cE,B) $ be the attached  graph. Then:
\begin{enumerate}
\item If the characteristic of $\FF$ is $0$ or $2$, then $\Der(\cE) = 0$.
\item  If the characteristic of $\FF$ is $p \neq 0,2$, then $\Der(\cE) $ is an abelian Lie algebra whose dimension is the number of connected components
$\Gamma_i $  of $\Gamma$ such that the order of $2$ in $\ZZ/ p \ZZ$ divides the balance $\bb(\Gamma_i)$. 
\end{enumerate}
\end{theorem}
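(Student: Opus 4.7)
My plan is to identify $\Der(\cE)$ with $\Lie(\AAut(\cE))$ and then read this Lie algebra off the exact sequence of Theorem \ref{th:exact}. The constant group scheme $\mathsf{H}$ in Corollary \ref{co:exact} is \'etale, hence $\Lie(\mathsf{H}) = 0$, and left exactness of $\Lie$ gives an isomorphism $\Lie(\DDiag(\Gamma)) \simeq \Der(\cE)$ induced by $\iota$. Concretely, a derivation is the same thing as an element of $\Aut(\cE_R)$ of the form $\id + \epsilon\delta$ for $R = \FF[\epsilon]/(\epsilon^2)$. Since $R$ has no proper idempotents, the Remark after \eqref{eq:orthogonal3} attaches to $\id + \epsilon\delta$ a unique permutation $\sigma \in \Aut(\Gamma)$; reducing modulo $\epsilon$ forces $\sigma = \id$, so the coordinate matrix of $\delta$ in $B$ is diagonal: $\delta(v_i) = r_i v_i$ with $r_i \in \FF$.

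Imposing $\delta(v_i^2) = 2 v_i\delta(v_i)$ on $v_i^2 = \sum_j \alpha_{ij}v_j$ yields $\alpha_{ij}(r_j - 2 r_i) = 0$ for all $i,j$; equivalently, $r_j = 2 r_i$ whenever $(v_i,v_j)\in E$. This is the tangent (additive) analogue of the relation $\varphi(w) = \varphi(v)^2$ defining $\DDiag(\Gamma)$, and I would analyse it exactly as in Section \ref{se:diagonal}. In characteristic $2$ the relation reads $r_j = 0$ for every vertex $v_j$ with an incoming edge; as $\cE^2 = \cE$ forces $\Gamma$ to have no sources, we get $\delta = 0$, proving (i) in this characteristic. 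In characteristic $\neq 2$, induction on length shows $r_{v_n} = 2^b r_{v_0}$ whenever $\gamma$ is a path of balance $b$ from $v_0$ to $v_n$; hence on each connected component $\Gamma_i$ the derivation is determined by a single scalar $r_a$ at one chosen vertex $a\in V_i$, and each cycle $\gamma$ through $a$ imposes $(2^{\bb(\gamma)}-1) r_a = 0$. Because $\Gamma_i$ has no sources it contains a directed cycle (so $\bb(\Gamma_i)\ge 1$), and the identity $\gcd(2^m-1,2^n-1) = 2^{\gcd(m,n)} - 1$ collapses the family of cyclic relations to the single constraint $N_i r_a = 0$ with $N_i = 2^{\bb(\Gamma_i)} - 1$.

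In characteristic $0$, $N_i\ne 0$ in $\FF$, so $r_a = 0$ on every component and $\Der(\cE) = 0$, completing (i). In characteristic $p > 2$, the constraint $N_i r_a = 0$ admits a nonzero solution iff $p \mid N_i$, iff $2^{\bb(\Gamma_i)} \equiv 1 \pmod{p}$, iff the order of $2$ in $\ZZ/p\ZZ$ divides $\bb(\Gamma_i)$, and each such component contributes a one-dimensional summand, yielding (ii). The Lie algebra is abelian because every derivation acts as a scalar on each element of $B$, so any two of them commute as endomorphisms of $\cE$. The only real work is the passage from the edge-by-edge relation $r_j = 2 r_i$ to the cycle constraint $N_i r_a = 0$; this is the direct additive counterpart of the argument in Theorem \ref{th:connected}, so no new idea is required beyond the gcd identity.
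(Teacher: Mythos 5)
Your proposal is correct. The first step is identical to the paper's: identify $\Der(\cE)$ with $\Lie(\AAut(\cE))$, apply the left-exact functor $\Lie$ to the exact sequence \eqref{eq:exact}, and use that the constant scheme $\aAut(\Gamma)$ is \'etale to conclude $\Der(\cE)\simeq\Lie(\DDiag(\Gamma))$. Where you diverge is in computing this Lie algebra: the paper simply quotes Theorem \ref{th:connected} (each sourceless connected component $\Gamma_i$ gives $\DDiag(\Gamma_i)\simeq\bmu_{N_i}$ with $N_i=2^{\bb(\Gamma_i)}-1$) together with the standard fact that $\Lie(\bmu_m)$ is $0$ or one-dimensional according to whether $\charac\FF$ divides $m$, whereas you recompute the tangent space from scratch over the dual numbers: diagonality of $\delta$ via the monomial-matrix/idempotent analysis, the edge relation $r_j=2r_i$, propagation along paths, the cycle constraints $(2^{\bb(\gamma)}-1)r_a=0$, and the gcd identity $\gcd(2^{m}-1,2^{n}-1)=2^{\gcd(m,n)}-1$. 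This is precisely the additive shadow of the proof of Theorem \ref{th:connected}, and it is in substance what the paper carries out in the corollary following this theorem when it exhibits an explicit basis of $\Lie(\DDiag(\Gamma))$. Your route is therefore somewhat more self-contained and elementary (it does not need the full multiplicative statement of Theorem \ref{th:connected}, only the no-sources observation and the existence of a directed cycle in each component), at the cost of redoing work the paper has already packaged; the paper's route is shorter but leans on the earlier structure theorem. All the individual steps you sketch (diagonality from the absence of proper idempotents in $\FF[\epsilon]$, $r_j=0$ at every non-source in characteristic $2$, well-definedness of $r_v=2^{\bb(\gamma)}r_a$ granted the cycle constraints, and the final divisibility criterion) check out.
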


\begin{proof}
The exact sequence \eqref{eq:exact} induces an exact sequence (see eq. [Milne, 10d]):
\[
0  \longrightarrow \Lie(\DDiag(\Gamma)) \stackrel{\textup{d}\iota}{\longrightarrow } \Lie(\AAut(\cE))  \stackrel{\textup{d}\rho}{\longrightarrow } \Lie(\aAut(\Gamma))
\]
But $\Lie(\aAut(\Gamma)) = 0$, as $\aAut(\Gamma)$ is a constant group scheme, and hence \'etale. On the other hand, $\Lie(\AAut(\cE)) = \Der(\cE)$
(see \cite[Example A.43]{EK}), so that $\Der(\cE)$ is isomorphic to $ \Lie (\DDiag(\Gamma))$ through the differential of $\iota$.
 
However, $\Lie(\bmu_m)$ is either $0$ if $\charac(\FF) \nmid m$, or it has dimension $1$ if  $\charac(\FF) \mid m$ (see \cite[Example A42]{EK}). Hence
Theorem \ref{th:connected} gives the results.
\end{proof}

\begin{remark}
As mentioned in the Introduction, the fact that $\Der(\cE)$ is $0$ for any evolution algebra $\cE$ with $\cE^2 = \cE$ over $\CC$ has already been proved in 
\cite[Theorem 2.1]{CGOT}.
\end{remark}

Consider the algebra of dual numbers $\FF[\epsilon] = \FF 1 \oplus \FF \epsilon$,
with $\epsilon^2 = 0$, and the natural homomorphism $\pi: \FF[\epsilon]\longrightarrow \FF$ in $\Alg_\FF$ ($\pi(1)=1$, $\pi(\epsilon)=0$).
Given a graph $\Gamma = (V,E)$, $\Lie(\DDiag(\Gamma))$ is the kernel of the induced group homomorphism $\pi_*:\DDiag(\Gamma)(\FF[\epsilon])  \longrightarrow \DDiag(\Gamma)(\FF)$. The elements of $\ker\pi_*$ are the maps
\[
\begin{split}
\varphi: V & \longrightarrow \FF[\cE]\\  
v  &\mapsto 1 + \delta(v) \epsilon,
\end{split}
\]
for a linear map $\delta: V \longrightarrow \FF$, such that, for any $(v,w) \in E$, $\varphi(w) = \varphi(v)^2$, which is equivalent to $\delta(w) = 2\delta(v)$.

Therefore we obtain the following straightforward  consequence of Theorems \ref{th:Der}, \ref{th:connected} and \ref{th:iota}.

\begin{corollary}
Let $\cE$ be an evolution algebra  with $\cE^2 = \cE$ over a field $\FF$ of characteristic $p \neq 0,2$. Let $B$ be a natural basis and let 
$\Gamma = \Gamma(\cE,B) $ be the associated graph. Let $\Gamma_i = (V_i, E_i)$ ($V_i\subseteq B$), $i = 1, \cdots, r $, be the connected components
of $\Gamma $ such that $p \mid 2^{\bb(\Gamma_i) }-1$. For any $i = 1, \ldots, r $, fix an element $v_i \in V_i$. Then a basis of $ \Lie (\DDiag(\Gamma))$ is given by 
$\hat{\delta}_1, \cdots, \hat{\delta}_r$, where
\begin{itemize}
\item $\hat{\delta}_i(v) = 0 $ if  $v \notin V_i$,
\item $\hat{\delta}_i(v_i) = v_i$,
\item $\hat{\delta}_i(w) = 2^{\bb(\gamma)} w$ if $w \in V_i$ and $\gamma = (w_0,e_1,w_1, \ldots, e_n,w_n)$ is a path connecting $w_0 = v_i$ and $w_n = w$.
\end{itemize}
\end{corollary}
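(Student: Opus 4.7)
The plan is to apply Theorem \ref{th:Der} together with the explicit description of $\Lie(\DDiag(\Gamma))$ supplied just above the statement: an element of $\Lie(\DDiag(\Gamma))$ is a function $\delta\colon V\to\FF$ satisfying $\delta(w)=2\delta(v)$ for every $(v,w)\in E$. Under the differential $\textup{d}\iota$ of the injection of Theorem \ref{th:iota}, such a $\delta$ corresponds to the diagonal derivation of $\cE$ acting by $v\mapsto \delta(v)\,v$ on the natural basis $B=V$. So it suffices to construct explicitly a basis of this space of functions.

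First I would use Proposition \ref{prop:connected} to reduce to each connected component separately: the edge condition only relates vertices within one component, so $\Lie(\DDiag(\Gamma))=\bigoplus_i\Lie(\DDiag(\Gamma_i))$ and every basis element is supported in a single $V_i$. By Theorem \ref{th:connected}, $\DDiag(\Gamma_i)\simeq\bmu_{N_i}$ with $N_i=2^{\bb(\Gamma_i)}-1$, and $\Lie(\bmu_{N_i})$ has dimension $1$ if $p\mid N_i$ and $0$ otherwise. The condition $p\mid 2^{\bb(\Gamma_i)}-1$ is precisely the statement that the order of $2$ in $(\ZZ/p\ZZ)^\times$ divides $\bb(\Gamma_i)$, which matches the indexing in the statement and confirms that the total dimension equals $r$.

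Next I would verify that the prescribed $\hat\delta_i$ really lies in $\Lie(\DDiag(\Gamma))$. This reduces to two checks on the underlying function $\delta_i\colon V\to\FF$ defined by $\delta_i(v)=0$ for $v\notin V_i$, $\delta_i(v_i)=1$, and $\delta_i(w)=2^{\bb(\gamma)}$ for any path $\gamma$ from $v_i$ to $w$ in $\Gamma_i$:
\begin{itemize}
\item \emph{Well-definedness in $\FF$.} For two paths $\gamma,\hat\gamma$ from $v_i$ to $w$, the concatenation $\gamma\hat\gamma^{-1}$ is a cycle (as in the proof of Theorem \ref{th:connected}) with balance $\bb(\gamma)-\bb(\hat\gamma)$. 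Since $\bb(\Gamma_i)$ divides the balance of every cycle of $\Gamma_i$ and $p\mid 2^{\bb(\Gamma_i)}-1$, we obtain $2^{\bb(\gamma)-\bb(\hat\gamma)}=1$ in $\FF$, so $2^{\bb(\gamma)}=2^{\bb(\hat\gamma)}$. (Here I use $p\neq 2$, so $2$ is invertible in $\FF$ and negative exponents make sense in $\FF^\times$.)
\item \emph{Edge relation.} For $(v,w)\in E$ with $v,w\in V_i$, appending the edge $(v,w)$ to a path $\gamma$ from $v_i$ to $v$ yields a path from $v_i$ to $w$ of balance $\bb(\gamma)+1$, whence $\delta_i(w)=2\delta_i(v)$. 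Edges between different components do not occur.
\end{itemize}

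Finally, $\hat\delta_1,\ldots,\hat\delta_r$ are linearly independent because they have pairwise disjoint supports (and $\hat\delta_i(v_i)=v_i\neq 0$). Since the number of elements matches the dimension of $\Lie(\DDiag(\Gamma))$ computed in the first paragraph, they form a basis, and transporting via $\textup{d}\iota$ gives the desired basis of $\Der(\cE)\simeq\Lie(\DDiag(\Gamma))$. The only mildly delicate step is the well-definedness check, which is really where the hypothesis on $2^{\bb(\Gamma_i)}-1$ is used.
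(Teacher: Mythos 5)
Your proposal is correct and follows essentially the route the paper intends: it combines the dual-numbers description of $\Lie(\DDiag(\Gamma))$ given just before the statement with Proposition~\ref{prop:connected}, Theorem~\ref{th:connected} and Theorem~\ref{th:iota}, exactly as the paper's one-line justification indicates, and your well-definedness check via the balance of the cycle $\gamma\hat\gamma^{-1}$ is the right (and only delicate) point. No gaps.
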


\begin{example}
The evolution algebra $\cE$ in Example \ref{ex:aut} has trivial group scheme of automorphisms, so $\Der(\cE) = 0$ for any ground field $\FF$.

However, for the  evolution algebra $\cE$ in Example \ref{ex:non-split}, we have the short exact sequence in \eqref{eq:short-exact2}, and $\DDiag(\Gamma) \simeq \
\bmu_3$. Hence $\Der(\cE) = 0$ unless $\charac(\FF) = 3$. In the later case, $\Der(\cE)$ is spanned by the map $d: v_1  \mapsto v_1$, $v_2 \mapsto 2v_2 = -v_2$. 
\end{example}

\begin{remark}
It must be remarked that for $\alpha = 1$,  the evolution algebra $\cE$ in Example \ref{ex:non-split} is the two-dimensional split para-Hurwitz algebra, and hence, for arbitrary $\alpha $ ($\neq 0$), $\cE $ is  a symmetric composition algebra (see \cite{E} and references therein).

As shown in Example \ref{ex:non-split}, the short exact sequence 
\[ 
1 \longrightarrow \bmu_3 \longrightarrow \aAut(\cE)\longrightarrow \mathsf{C}_2 \longrightarrow 1
\]
splits if and only if $\alpha  \in \FF^3$, that is, if and only if $\cE$ is, up to isomorphism, the split two-dimensional para-Hurwitz algebra.
\end{remark}

\end{document}